\documentclass[11pt,a4paper]{amsart}
\usepackage[english]{babel}
\usepackage{amsmath}
\usepackage{amsfonts}
\usepackage{amssymb}
\usepackage{amsthm}
\usepackage{enumerate}
\usepackage{graphicx}
\usepackage{stmaryrd}
\usepackage{hyperref}
\usepackage{bbm}
\usepackage{xcolor,mathtools}
\usepackage{mathrsfs}
\usepackage{esint}
\usepackage{cases}
\usepackage{microtype}
\usepackage{comment}
\usepackage[toc]{appendix}
\usepackage[left=1.5cm,right=1.5cm,top=1.5cm,bottom=1.5cm]{geometry}

\theoremstyle{definition}
\newtheorem{defi}{Definition}[section]

\theoremstyle{plain}
\newtheorem{teo}[defi]{Theorem}
\newtheorem{lem}[defi]{Lemma}
\newtheorem{cor}[defi]{Corollary}

\newcommand{\R}{\mathbb{R}}
\newcommand{\pp}{\partial}
\newcommand{\ve}{\varepsilon}
\newcommand{\se}{\subseteq}
\newcommand{\ceq}{\coloneqq}
\newcommand{\spa}{\operatorname{span}}

\DeclarePairedDelimiter{\abs}{\lvert}{\rvert}

\subjclass[2020]{53C17, 28A75.}

\title{A note on the diameter of small sub-Riemannian balls}

\keywords{Sub-Riemannian geometry, Carnot-Carathéodory distance, calibrations.}

\begin{document}

\author{Marco Di Marco}
\address[M.~Di Marco]{ETH Z\"urich, Department of Mathematics, R\"amistrasse 101, 8092 Z\"urich, Switzerland. \newline \indent 
Dipartimento di Matematica ``T.~Levi-Civita'', Università di Padova, via Trieste 63, 35121 Padova, Italy. 
}
\email{mdimarco@ethz.ch, marco.dimarco@phd.unipd.it}

\author{Gianluca Somma}
\address[G.~Somma]{Dipartimento di Matematica ``T.~Levi-Civita'', Università di Padova, via Trieste 63, 35121 Padova, Italy.}
\email{gianluca.somma@phd.unipd.it}

\author{Davide Vittone}
\address[D.~Vittone]{Dipartimento di Matematica ``T.~Levi-Civita'', Università di Padova, via Trieste 63, 35121 Padova, Italy.}
\email{davide.vittone@unipd.it}

\begin{abstract}
We observe that the diameter of  small (in a locally uniform sense) balls in $C^{1,1}$ sub-Riemannian manifolds equals twice the radius.  We also prove that, when the regularity of the structure is further lowered to $C^0$, the diameter is arbitrarily close to twice the radius. Both results hold independently of the bracket-generating condition.
\end{abstract}

\maketitle

\section{Introduction}
The purpose of the present note is adding to the literature the following observation.

\begin{teo}\label{teo_liscio}
Let $M$ be a smooth manifold endowed with a $C^{1,1}$ sub-Riemannian structure. Then, for every  $p \in M$ there exist a neighbourhood $V$ of $p$ and $\bar r>0$ such that
\[
\mathrm{diam}(B(q,r))=2r\qquad\text{for every $0 < r < \bar r$ and $q \in V$.}
\]
\end{teo}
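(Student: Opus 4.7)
The upper bound is immediate from the triangle inequality: for $x,y\in B(q,r)$, $d(x,y)\leq d(x,q)+d(q,y)<2r$, so $\mathrm{diam}(B(q,r))\leq 2r$. The content of the theorem is the matching lower bound, which I would reduce to the following claim: for every $q$ in a sufficiently small neighborhood $V$ of $p$, there exists a unit-speed horizontal curve $\gamma_q\colon[-r_p,r_p]\to M$ with $\gamma_q(0)=q$ that realizes the sub-Riemannian distance between $\gamma_q(-s)$ and $\gamma_q(s)$ for every $s\in[0,r_p]$. Granting this, for $0<r<r_p$ and $s\in(0,r)$ the points $\gamma_q(\pm s)$ lie in $B(q,r)$ with $d(\gamma_q(-s),\gamma_q(s))=2s$; letting $s\nearrow r$ yields $\mathrm{diam}(B(q,r))\geq 2r$.

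The natural candidates for the curves $\gamma_q$ are normal sub-Riemannian geodesics. The $C^{1,1}$ hypothesis makes the sub-Riemannian Hamiltonian $H\colon T^*M\to\R$ a $C^{1,1}$ function; in particular, its Hamiltonian vector field is Lipschitz continuous and the associated flow $\Phi^H_t$ is uniquely defined and jointly continuous in $(t,\lambda)$ in a neighborhood of any reference covector $\lambda_p\in T_p^*M$ with $H(\lambda_p)=\tfrac12$. Picking covectors $\lambda_q\in T_q^*M$ depending continuously on $q$ and satisfying $H(\lambda_q)=\tfrac12$ (which is possible as long as some nonzero horizontal direction at $p$ has been fixed), the curves $\gamma_q(t)\ceq\pi(\Phi^H_t(\lambda_q))$ are unit-speed horizontal curves through $q$ all defined on a common interval $[-r_p,r_p]$, provided $V$ is small enough.

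The remaining and main step is to prove the length-minimization property of $\gamma_q$, which I would approach via the calibration method flagged by the paper's keywords. Hamilton--Jacobi theory suggests the following construction: propagate a codimension-one hypersurface through $q$, transverse to $\dot\gamma_q(0)$, by the Hamiltonian flow, obtaining a Lagrangian submanifold of $T^*M$ that projects to $M$ as a bi-Lipschitz homeomorphism on a neighborhood $U_q$ of the trace of $\gamma_q$; pulling back the time parameter yields a Lipschitz function $f_q\colon U_q\to\R$ solving the eikonal equation $H(\cdot,df_q)\equiv\tfrac12$. Two consequences follow: $df_q(\dot\gamma_q)\equiv 1$ along $\gamma_q$, and $df_q(w)\leq\norm{w}_H$ for every horizontal vector $w$ (the dual bound $\norm{df_q}_H^*\leq 1$ being encoded in the eikonal equation). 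Then for any horizontal competitor $\sigma$ joining $\gamma_q(-s)$ to $\gamma_q(s)$ one gets
\[ L(\sigma)\geq\int df_q(\dot\sigma)\,dt=f_q(\gamma_q(s))-f_q(\gamma_q(-s))=2s, \]
so $d(\gamma_q(-s),\gamma_q(s))=2s$. The principal obstacle is making this Hamilton--Jacobi construction quantitative and \emph{uniform in $q\in V$}; this is where the full $C^{1,1}$ (rather than merely $C^1$) regularity is expected to enter, through a Lipschitz inverse function theorem guaranteeing a definite size of the Lagrangian projection. Notably, the bracket-generating condition plays no role, because the calibration inequality bounds the length of any horizontal curve irrespective of what region such curves can reach.
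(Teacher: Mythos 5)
Your proposal follows essentially the same route as the paper: the reduction to the lower bound via a family of calibrated unit-speed normal extremal trajectories, and the construction of the calibration as the differential of a solution of the eikonal equation $H(\cdot,df)=\tfrac12$ obtained by flowing a transverse hypersurface of covectors under the (locally Lipschitz, hence uniquely integrable) Hamiltonian vector field, is precisely the content of Lemma~\ref{lem_esistenzacalibrazione} and its proof in Appendix~\ref{app_calib}. The one structural difference is that you run the construction separately for each $q\in V$ (a hypersurface through $q$, a function $f_q$ on a neighbourhood $U_q$ of the trace of $\gamma_q$), which is what creates the uniformity problem you flag as the ``principal obstacle'' and leave unresolved. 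The paper sidesteps this entirely: the construction is performed \emph{once}, at $p$, yielding a single exact $1$-form $\Lambda$ and a single unit horizontal vector field $Y$ on a fixed neighbourhood $W$; since the integral curves of $Y$ pass through every point of $W$, the same $\Lambda$ calibrates the integral curve of $Y$ through every $q$ in a fixed $V\Subset W$, so no quantitative, $q$-uniform estimate on the size of the Lagrangian projection is needed. Two smaller points: your calibration inequality only controls competitors whose support stays in the domain of $f_q$, so (as in the paper) one must first fix $r_p$ with $B(q,2r_p)\se W$ for all $q\in V$ and dispose of escaping competitors by the trivial bound $L(\sigma)\ge 2r_p$; and the endpoints should be taken at $\pm(r-\delta)$ (or $\pm s$ with $s\nearrow r$, as you do) since $\gamma_q(\pm r)$ need not lie in the open ball. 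With the single-calibration observation replacing the per-point uniformity step, your argument closes and coincides with the paper's.
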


The inequality $\mathrm{diam}(B(q,r))\leq 2r$ is trivial in every metric space; in general the equality does not hold, although it is well-known for instance in $\R^n$ and Banach spaces (for balls of arbitrary radii) and in Riemannian manifolds (for small radii). However, in the natural context of sub-Riemannian Geometry the question apparently went under the radar: in fact, to our knowledge Theorem~\ref{teo_liscio} is known only in Carnot groups (see e.g.~\cite[Proposition~2.4]{FSSC} or~\cite[Proposition~10.1.14]{LDlibro}), while for more general sub-Riemannian manifolds one has only the partial result~\cite[Theorem~1.3]{donmagnani}, that we discuss below. Let us stress the fact that, in Theorem~\ref{teo_liscio}, we do not assume the horizontal distribution to be bracket-generating\footnote{In particular, the neighbourhood $V$ must be understood with respect to the manifold topology.}. 

The proof of Theorem~\ref{teo_liscio} is quite simple and is based on a classical {\em calibration} argument, see e.g.~\cite{LiuSussmann,montgomery}. Calibrations are usually exploited to prove  length-minimality of a given curve; Theorem~\ref{teo_liscio} stems from the fact that, actually, calibrations can provide minimality for a whole family of curves spanning a neighbourhood of a given point. In order to make the paper self-contained, we provide in Appendix~\ref{app_calib} a proof of the existence of calibrations (Lemma~\ref{lem_esistenzacalibrazione}) under our assumptions on the  sub-Riemannian manifold.
The proof of Theorem~\ref{teo_liscio}, together with the relevant definitions, is provided in Section~\ref{sec_c2}.

The following Corollary~\ref{cor_esistgeod} provides another interesting outcome of the proof of Theorem~\ref{teo_liscio}: in fact, the existence of calibrations immediately implies that a length minimizing geodesic exists through every given point $p$.

\begin{cor}\label{cor_esistgeod}
    Let $M$ be a smooth manifold endowed with a $C^{1,1}$ sub-Riemannian structure. Then, for every  $p \in M$ there exists  a length-minimizing curve passing through $p$.
\end{cor}

Let us observe that Theorem~\ref{teo_liscio} is  stronger than Corollary~\ref{cor_esistgeod}: the latter, in fact, states that for every point $p$ there exists a length-minimizing curve passing through $p$, whose length $2r_p>0$ depends, in principle, on $p$. 
On the contrary, Theorem~\ref{teo_liscio}  shows that $r_p$ can (locally) be chosen to be independent from the particular point $p$: it is precisely this uniformity in $p$ that becomes relevant in view of the results stated in~\cite{donmagnani} that are mentioned below.

Our second result is the following theorem, where we prove an estimate on the diameter of small balls for some more general control problems; namely, when the regularity assumptions on the sub-Riemannian structure are further relaxed and the horizontal distribution is only assumed to be continuous. We refer to Definition~\ref{def_CCC0} for the notion of $C^0$ Carnot-Carathéodory structure. 

\begin{teo} \label{eps diam}
Let $M$ be a smooth manifold endowed with a $C^0$ Carnot-Carathéodory structure. Then, for every $p \in M$ and $\ve>0$ there exist a neighbourhood $V$  of $p$ and $\bar r_\ve>0$ such that
\[
2r(1-\ve) \leq \mathrm{diam}(B(q,r)) \leq 2r \qquad \text{for every $0 < r < \bar r_\ve$ and $q \in V$.}
\]
\end{teo}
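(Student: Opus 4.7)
The upper bound $\mathrm{diam}(B(q,r))\le 2r$ is trivial, so the content is the lower bound. My plan is to adapt the calibration argument behind Theorem~\ref{teo_liscio}, replacing the exact calibration (which requires $C^{1,1}$ regularity) with a smooth \emph{approximate} one whose dual norm is bounded by $1+\delta$ and that pairs $\ge 1-\delta$ with a distinguished horizontal direction near $p$.

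Given $\ve>0$, fix $\delta>0$ so small that $(1-\delta)/(1+\delta)\ge 1-\ve$, and work in coordinates around $p=0$ in which a continuous orthonormal local frame $X_1,\dots,X_k$ of the horizontal distribution satisfies $X_i(p)=\pp_{x_i}$ for $i=1,\dots,k$. Set $f\ceq x_1$ and $\omega\ceq df$. Then $\omega_p(X_1(p))=1$ and the sub-Riemannian dual norm satisfies $\|\omega_p\|^*=\big(\sum_i\omega_p(X_i(p))^2\big)^{1/2}=1$. By continuity of the frame, there is a neighbourhood $U$ of $p$ on which
\[
\omega(X_1)\ge 1-\delta\qquad\text{and}\qquad \|\omega\|^*\le 1+\delta.
\]
Exploiting the local Euclidean boundedness of the $X_i$, we may further choose $r_{p,\ve}>0$ and a neighbourhood $V\se U$ of $p$ such that $B(q,3r_{p,\ve})\se U$ for every $q\in V$; equivalently, horizontal curves of length at most $3r_{p,\ve}$ issuing from $V$ remain in $U$.

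Fix $q\in V$ and $0<r<r_{p,\ve}$. By Peano's theorem there exists an integral curve $\gamma\colon[-r,r]\to U$ of $X_1$ with $\gamma(0)=q$; since $\|X_1\|\equiv 1$, this is a unit-speed horizontal curve, and $\gamma(\pm(r-\sigma))\in B(q,r)$ for every $\sigma\in(0,r)$. For any horizontal curve $\eta$ joining $\gamma(-(r-\sigma))$ to $\gamma(r-\sigma)$, either $\mathrm{length}(\eta)\ge 2r\ge 2r(1-\ve)$, or $\mathrm{length}(\eta)<2r$ and $\eta$ is contained in $B(q,3r)\se U$ by the triangle inequality. In the latter case
\[
f(\gamma(r-\sigma))-f(\gamma(-(r-\sigma)))=\int_\eta\omega\le (1+\delta)\,\mathrm{length}(\eta),
\]
while integration of $\omega(X_1)$ along $\gamma$ gives
\[
f(\gamma(r-\sigma))-f(\gamma(-(r-\sigma)))=\int_{-(r-\sigma)}^{r-\sigma}\omega(X_1)(\gamma(t))\,dt\ge 2(r-\sigma)(1-\delta).
\]
Passing to the infimum over $\eta$ yields $d(\gamma(-(r-\sigma)),\gamma(r-\sigma))\ge 2(r-\sigma)(1-\delta)/(1+\delta)$; since this holds for every $\sigma\in(0,r)$, we conclude $\mathrm{diam}(B(q,r))\ge 2r(1-\delta)/(1+\delta)\ge 2r(1-\ve)$.

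The main obstacle is trapping \emph{every} competing horizontal curve inside the region where the approximate-calibration bound is valid; this is precisely the purpose of the buffer $B(q,3r_{p,\ve})\se U$ built into the construction of $V$, a condition guaranteed by the local Euclidean boundedness of the $C^0$ frame. The appeal to Peano rather than Picard is harmless, since any integral curve of $X_1$ suffices and uniqueness is not needed.
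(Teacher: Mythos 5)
Your proposal follows the same quasi-calibration scheme as the paper: single out a horizontal direction at $p$, choose a smooth exact $1$-form whose dual norm is at most $1+\delta$ on a neighbourhood $U$ and which pairs at least $1-\delta$ with that direction, flow along the direction to produce two points of $B(q,r)$, and trap every competitor of length $<2r$ inside $U$ before integrating the form. In the case where the horizontal structure near $p$ is given by a linearly independent continuous frame (constant rank), your argument is correct and complete, and the bookkeeping with the buffer $B(q,3r_{p,\ve})\se U$ and the limit $\sigma\searrow 0$ matches the paper's.

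The gap concerns the generality of Definition~\ref{def_CCC0}: the generating family $X_1,\dots,X_m$ is allowed to be linearly dependent and the rank of $\Delta_q=\spa\{X_1(q),\dots,X_m(q)\}$ may jump, while the length is $\int|h_\gamma|$ for the minimal-norm control rather than a Riemannian length for an orthonormal frame. Two of your steps then fail. First, a ``continuous orthonormal local frame of the horizontal distribution'' need not exist (take $X_1=\pp_1$, $X_2=x_1\pp_2$ near $0\in\R^2$). Second, and more seriously, the dual norm of a covector $\xi$ on $\Delta_q$ is $\|\xi\|^*=\bigl|\bigl(\langle\xi,X_j(q)\rangle\bigr)_{j=1}^m\bigr|$ computed over the \emph{whole} generating family; if the family is redundant, say $m=2$ and $X_1(p)=X_2(p)=\pp_1$, then for $\omega=dx_1$ one gets $\|\omega_p\|^*=\sqrt{2}$ while $\omega_p(X_1(p))=1$, so your inequality only yields $\mathrm{diam}(B(q,r))\gtrsim \sqrt{2}\,r$, and the integral curve of $X_1$ is not (almost) unit speed for the CC length. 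The paper repairs precisely this point: it takes the minimal-norm lift $\overline h\in\R^m$ of $X_1(p)$ under $A(h)=\sum_j h_jX_j(p)$, defines the covector by $\langle\lambda,\overline h\rangle=|\overline h|$ and $\lambda=0$ on $\overline h^{\perp}\supseteq\ker A$ (so that it descends to $\Delta_p$ with dual norm exactly $1$), and follows the integral curve of $\sum_j(\overline h_j/|\overline h|)X_j$ instead of $X_1$. With that substitution, the rest of your argument (Peano existence, the trapping radius, the limit in $\sigma$) goes through verbatim.
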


Again, in Theorem~\ref{eps diam} we do not assume the bracket-generating condition on the horizontal distribution\footnote{In particular, the neighbourhood $V$ must be understood again with respect to the manifold topology.}. Theorem~\ref{eps diam} was proved by S.~Don and V.~Magnani~\cite[Theorem~1.3]{donmagnani} for smooth equiregular sub-Riemannian manifolds\footnote{Notice that Theorem~\ref{teo_liscio} holds under these assumptions.}: this provided a key result in the refined study of the measure of hypersurfaces performed in~\cite{donmagnani}. The proof of~\cite[Theorem~1.3]{donmagnani} is based on the fact that the blow-up of equiregular sub-Riemannian manifolds at a fixed point is a ``tangent'' Carnot group and it relies on delicate, ``locally uniform'' estimates on the rate of convergence to the tangent group under blow-up. Besides working in a more general setting, our proof avoids this machinery and is based on a soft argument that provides a simple ``quasi-calibration'' for certain ``quasi-optimal'' curves. The proof of Theorem~\ref{eps diam} is contained in Section~\ref{sec_c0}.

\section{Proof of Theorem\texorpdfstring{~\ref{teo_liscio}}{ 1.1}}\label{sec_c2}
The regularity of functions, vector fields, etc.~on a smooth manifold $M$ will always be understood with respect to the ``Euclidean'' manifold structure. For instance, a $C^{1,1}$ vector field is a $C^1$ vector field whose first-order derivatives are (in charts) locally Lipschitz continuous.

\begin{defi}\label{def_src2}
    We say that $(M,\Delta,g)$ is an \emph{$n$-dimensional $C^{1,1}$ sub-Riemannian manifold of  rank $m$} if
    \begin{itemize}
        \item $M$ is a connected smooth manifold of dimension $n$;
\item $\Delta=\sqcup_{p \in M} \Delta_p$ is a $C^{1,1}$ distribution on $M$, i.e., a map $p \mapsto \Delta_p$ which assigns to each $p \in M$ an $m$-dimensional vector subspace of $T_pM$;
\item $g$ is a $C^{1,1}$ metric on $\Delta$.
    \end{itemize}
For every $v \in \Delta_p$, we also set $\abs{v}_p \ceq \sqrt{g_p(v,v)}$.
Every vector field $X$ such that $X(p) \in \Delta_p$ for every $p \in M$ is said to be \emph{horizontal}.
    \end{defi}

We stress that in Definition \ref{def_src2} we are not requiring the family of horizontal vector fields to be bracket-generating.

For the rest of this section, $M=(M,\Delta,g)$ will denote a fixed $n$-dimensional $C^{1,1}$ sub-Riemannian manifold of constant rank $m$.

    \begin{defi}\label{def_admc2}
We say that an absolutely continuous curve $\gamma:[a,b] \to M$ is an \emph{admissible curve} joining $p$ and $q$ if $\gamma(a)=p$, $\gamma(b)=q$ and $\dot\gamma(t)\in\Delta_{\gamma(t)}$ for a.e.~$t\in[a,b]$. The \emph{length} of $\gamma$ is
    \[
    L(\gamma) \ceq \int_a^b \abs{\dot{\gamma}(t)}_{\gamma(t)} \, dt.
    \]
    For every $p,q \in M$, the \emph{Carnot-Carathéodory (CC) distance} is
\[
d(p,q) \ceq \inf \lbrace L(\gamma): \text{$\gamma$ is an admissible curve joining $p$ and $q$}\rbrace,
\]
where we agree that  $\inf \emptyset \ceq +\infty$.
\end{defi}

Recall that the existence of a {\em calibration} is a sufficient condition for the length-minimality of a given curve, see e.g.~\cite{montgomery}. We provide the following statement, whose (well-known) proof is postponed to Appendix~\ref{app_calib} (see also \cite{sz2017}).

\begin{lem}\label{lem_esistenzacalibrazione}
    For every $p\in M$ there exist a neighbourhood $W$ (with respect to the manifold topology) of $p$, a  horizontal vector field $Y$ on $W$ and an exact 1-form $\Lambda$ on $W$ such that
    \begin{align*}
        & \langle \Lambda(q),v\rangle \leq |v|_q\qquad \text{for every }q\in W\text{ and }v\in\Delta_q,\\
        & \langle \Lambda(q),Y(q)\rangle=|Y(q)|_q=1\qquad \text{for every }q\in W.
    \end{align*}
\end{lem}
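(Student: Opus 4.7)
Fix $p \in M$ and a local $C^{1,1}$ orthonormal frame $X_1, \ldots, X_m$ of $\Delta$ on an open neighbourhood of $p$. The plan is to construct $f \in C^1(W)$ on a smaller neighbourhood $W$ of $p$ solving the \emph{eikonal equation}
\[
\sum_{i=1}^m (X_i f)^2 \equiv 1 \qquad \text{on } W,
\]
and then set $\Lambda \ceq df$ and $Y \ceq \sum_{i=1}^m (X_i f)\, X_i$. With such an $f$ the desired properties follow at once: $Y$ is horizontal with $|Y|_q^2 = \sum_i (X_i f)^2(q) = 1$, pairing yields $\langle df(q), Y(q)\rangle = \sum_i (X_i f)^2(q) = 1$, and for $v = \sum_i a_i X_i(q) \in \Delta_q$ Cauchy--Schwarz gives
\[
\langle df(q), v\rangle = \sum_i a_i (X_i f)(q) \le \Big(\sum_i a_i^2\Big)^{1/2}\Big(\sum_i (X_i f)^2(q)\Big)^{1/2} = |v|_q.
\]
Exactness of $\Lambda$ is automatic since $\Lambda=df$.

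\textbf{Solving the eikonal equation.} I would regard the PDE above as the Hamilton--Jacobi equation $H(q, df(q)) = 0$ for the $C^{1,1}$ Hamiltonian $H(q, \xi) \ceq \tfrac{1}{2}\big(\sum_i \langle \xi, X_i(q)\rangle^2 - 1\big)$, and invoke the classical method of characteristics locally at $p$. Concretely, choose a smooth hypersurface $\Sigma \ni p$ whose tangent space contains $\spa\{X_2(p), \ldots, X_m(p)\}$ but not $X_1(p)$, prescribe the Cauchy datum $f|_\Sigma \equiv 0$, and lift it to the unique covector $\xi_0 \in T_p^*M$ determined by $\xi_0|_{T_p\Sigma} = 0$ and $\langle \xi_0, X_1(p)\rangle = 1$. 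By construction $H(p, \xi_0) = 0$, while the non-characteristic condition $\partial_\xi H(p, \xi_0) = X_1(p) \notin T_p\Sigma$ holds by transversality. Extending $\xi_0$ to a compatible covector field $\xi$ along $\Sigma$ (this extension is smooth up to the $C^{1,1}$ regularity of $H$) and flowing the bicharacteristic ODE on $T^*M$ from initial data $(y, \xi(y))$, I would declare $f(\pi\Phi_s(y, \xi(y))) \ceq s$; the inverse function theorem gives a well-defined $f$ on a neighbourhood $W$ of $p$, and a direct computation along characteristics shows that it satisfies the eikonal equation pointwise.

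\textbf{Main obstacle.} The only subtlety is the $C^{1,1}$ regularity assumption on the sub-Riemannian structure, which makes the Hamilton vector field merely Lipschitz rather than $C^1$. One must therefore check that the characteristic method still delivers a $C^1$ (in fact $C^{1,1}$) solution $f$, so that $df$ is a continuous 1-form and $Y$ is a continuous horizontal vector field; this follows from the $C^{0,1}$ dependence of the bicharacteristic flow on initial data together with the Lipschitz version of the inverse function theorem. Since the lemma only needs $\Lambda$ to be continuous and exact and $Y$ to be a continuous horizontal unit field, this level of regularity is exactly enough to make the calibration argument in Section~\ref{sec_c2} run.
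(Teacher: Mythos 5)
Your proposal is correct and is essentially the paper's own argument in different clothing: the paper also runs the Hamiltonian (bicharacteristic) flow of $H(q,\lambda)=\tfrac12\sum_i\langle\lambda,X_i(q)\rangle^2$ from a hypersurface transversal to $X_1(p)$ with the lifted covector datum, and its $\Lambda$ is precisely your $df=Q_*(dt)$, with conservation of $H$ giving the eikonal identity and Cauchy--Schwarz giving the calibration inequality. The ``direct computation along characteristics'' you defer is spelled out in the paper as the verification that the momentum covector coincides with $Q_*(dt)$, and your remarks on the $C^{1,1}$/Lipschitz-flow issue match the paper's treatment.
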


The 1-form $\Lambda$ in Lemma~\ref{lem_esistenzacalibrazione} calibrates the integral curves of $Y$, which are therefore {\em all} length-minimizing: this remark is the key ingredient for proving our main result.

\begin{proof}[Proof of Theorem~\ref{teo_liscio}]
Let $W,Y$ and $\Lambda$ be as in Lemma~\ref{lem_esistenzacalibrazione} and fix an open subset $V \Subset W$. Since the Euclidean distance is locally controlled by above, up to a positive multiplicative constant, by the CC one, there exists $\bar r>0$ such that $B(q,2 \bar r) \se W \text{ for every }q \in V$. Consider the curve $\gamma_0: (-r,r) \to B(q,r) \se W$ defined by $\gamma_0(0)=q$ and $\dot{\gamma_0}(t)=Y$ for every $t \in (-r,r)$. Fix also $\delta \in (0,r)$ and let $q_1 \ceq \gamma_0(-r+\delta)$, $q_2 \ceq \gamma_0(r-\delta)$; then, $\gamma_0$ is an admissible curve joining $q_1$ and $q_2$ and $q_1, q_2 \in B(q,r)$. Let $\kappa: [a,b] \to M$ be another admissible curve joining $q_1$ and $q_2$. If the support of $\kappa$ is not contained in $W$, then $L(\kappa) \geq 2 \bar r$; otherwise, the support of $\kappa$ is contained in $W$ and
\[
L(\kappa)  = \int_a^b \abs{\dot{\kappa}(t)}_{\kappa(t)} \,dt \geq \int_a^b \langle \Lambda(\kappa(t)), \dot{\kappa}(t) \rangle \,dt = \int_{\kappa} \Lambda = \int_{\gamma_0} \Lambda = \int_{-r+\delta}^{r-\delta} \langle \Lambda(\gamma_0(t)), Y(\gamma_0(t)) \rangle  = 2(r-\delta).
\]
In any case, we obtain
\[\mathrm{diam}(B(q,r)) \geq d(q_1,q_2) \geq 2(r-\delta)\]
and we conclude by letting $\delta \searrow 0$.
\end{proof}

\section{Proof of Theorem\texorpdfstring{~\ref{eps diam}}{ 1.3}}\label{sec_c0}

\begin{defi}\label{def_CCC0}
A \emph{$C^0$ Carnot-Carathéodory space of dimension $n$} is a connected smooth manifold $M$ of dimension $n$ endowed with a family of continuous vector fields $X_1,\dots,X_m$ such that, for every $p \in M$, there exists $1 \leq i \leq m$ such that $X_i(p) \neq 0$.

For $p\in M$ we denote by $\Delta_p \ceq \spa \lbrace X_1(p),\dots,X_m(p) \rbrace\neq\{0\}$ the space of horizontal vectors at $p$.
\end{defi}

We stress the fact that the vector fields $X_1,\dots, X_m$ are required to be neither bracket-generating nor linearly independent.

For the rest of this section, $M$ will denote a fixed $C^0$ Carnot-Carathéodory space of dimension $n$ and $X_1,\dots,X_m$ its family of continuous vector fields.

\begin{defi}\label{def_admC0}
    An absolutely continuous curve $\gamma:[a,b]\to M$ is an {\em admissible curve} joining $p$ and $q$ if $\gamma(a)=p$, $\gamma(b)=q$ and there exists a measurable function $h:[a,b]\to\R^m$ such that $\dot\gamma(t)=\sum_{j=1}^m h_j(t)X_j(\gamma(t))$ for a.e.~$t\in[a,b]$.
\end{defi}

Since the vector fields $X_1,\dots,X_m$ are not assumed to be linearly independent,  the function $h$ in Definition~\ref{def_admC0} is not unique in general. However, one can choose $h$ so that it is measurable and, for a.e.~$t$, where $h(t)$ is the element of minimal norm in the affine space $\left\{u\in\R^m:\dot\gamma(t)=\sum_{j=1}^m u_j(t)X_j(\gamma(t))\right\}$; see e.g.~\cite[Lemma~3.68]{abb}. We will write $h_\gamma$ to denote the function $h$ constructed in this way. 

\begin{defi}
    The \emph{length} of an admissible curve $\gamma:[a,b]\to M$ is
    \[
    L(\gamma) \ceq \int_a^b |h_\gamma(t)| \, dt.
    \]
    For every $p,q \in M$, the \emph{Carnot-Carathéodory (CC) distance} is
\[
d(p,q) \ceq \inf \lbrace L(\gamma): \text{$\gamma$ is an admissible curve joining $p$ and $q$}\rbrace,
\]
where we agree that  $\inf \emptyset \ceq +\infty$.
\end{defi}

\begin{proof}[Proof of Theorem~\ref{eps diam}]
Clearly, by the triangle inequality we always have $\mathrm{diam}(B(q,r)) \leq 2r$. For the other inequality, up to rearranging the vector fields,  we can assume that $X_1(p)\neq 0$. By continuity, there exists a  neighbourhood (with respect to the manifold topology) $U \se M$ of $p$ such that $X_1 \neq 0$ on $U$.

Consider the surjective linear map $A:\R^m\to\Delta_p$ defined by $A(h)\ceq\sum_{j=1}^m  h_j X_j(p)$. Let us write $X_1(p)=A(\overline h)$, where $\overline h\in\R^m$ is the element of minimal norm in the affine space $A^{-1}(X_1(p))$; observe, in particular, that $\overline{h}$ is orthogonal to $\ker A$. Let $\lambda\in(\R^m)^*$ be defined by $\langle\lambda,\overline h\rangle\ceq|\overline{h}|$ and $\lambda=0$ on $\overline{h}^\perp$; we define $\lambda_p\in(\Delta_p)^*$ by
\[
\langle\lambda_p, v\rangle\ceq\langle\lambda, h\rangle\quad\text{whenever }v=A(h).
\]
Observe that $\lambda_p$ is well defined because $\lambda=0$ on $\overline{h}^\perp\supseteq \ker A$. We also observe that 
\begin{align*}
    &\left|\left\langle\lambda_p,\textstyle\sum_{j=1}^m  h_j X_j(p)\right\rangle\right|=|\langle\lambda,h\rangle| \leq |h|\quad\text{for every }h\in\R^m,\\
    &\left\langle\lambda_p, \textstyle\sum_{j=1}^m  \overline h_j X_j(p)\right\rangle=\langle\lambda,\overline h\rangle = |\overline h|.
\end{align*}
Up to shrinking $U$, we can fix a smooth exact 1-form $\omega$ on $U$ such that $\omega_p=\lambda_p$; by continuity (and up to shrinking $U$ again) we find that
\begin{align}
    &\left|\left\langle\omega_q,\textstyle\sum_{j=1}^m  h_j X_j(q)\right\rangle\right| \leq (1+\varepsilon) |h|\quad\text{for every }h\in\R^m\text{ and }q\in U,\label{eq_quasicalibrazione1}\\
    &\left\langle\omega_q, \textstyle\sum_{j=1}^m  \frac{\overline h_j}{|\overline h|} X_j(q)\right\rangle\geq 1-\varepsilon^2 \quad\text{for every }q\in U.\label{eq_quasicalibrazione2}
\end{align}
Now, consider an open neighbourhood $V \Subset U$ of $p$; since the Euclidean distance is locally controlled by above, up to a positive multiplicative constant, by the CC one, there exists $\bar r_\ve>0$ such that $B(q,2 \bar r_\ve) \se U \text{ for every }q \in V$. We claim that
\[
\mathrm{diam}(B(q,r)) \geq 2r(1-\ve) \qquad \text{for every }r \in (0,\bar r_\ve) \text{ and } q \in V.
\]
Indeed, for $q\in V$ and $r\in (0,\bar r_\ve)$, consider a curve $\gamma_0: (-r,r) \to B(q,r) \se U$ defined by $\gamma_0(0)=q$ and $\dot{\gamma_0}(t)=\sum_{j=1}^m\frac{\overline h_j}{|\overline{h}|}X_j(\gamma_0(t))$ for every $t\in (-r,r)$\footnote{Observe that, in general, $\gamma_0$ may not be unique.}.
Fix also $\delta \in (0,r)$ and let $q_1 \ceq \gamma_0(-r+\delta)$, $q_2 \ceq \gamma_0(r-\delta)$; then, $\gamma_0$ is an admissible curve joining $q_1$ and $q_2$ and $q_1, q_2 \in B(q,r) \se B(q,\bar r_\ve)$. Let $\gamma: [a,b] \to M$ be another admissible curve joining $q_1$ and $q_2$. If the support of $\gamma$ is not contained in $B(q,2\bar r_\ve)$, then $L(\gamma) \geq 2 \bar r_\ve$; otherwise, the support of $\gamma$ is contained in $B(q,2\bar r_\ve)\se U$ and, since $\omega$ is exact on $U$,
\[
\begin{split}
L(\gamma) & = \int_a^b \abs{h_\gamma(t)} \,dt 
\stackrel{\eqref{eq_quasicalibrazione1}}{\geq} \frac{1}{1+\ve}\int_a^b \langle \omega_{\gamma(t)}, \dot{\gamma}(t) \rangle \,dt 
= \frac{1}{1+\ve} \int_\gamma \omega \\
& = \frac{1}{1+\ve} \int_{\gamma_0} \omega = \frac{1}{1+\ve} \int_{-r+\delta}^{r-\delta} \left\langle \omega_{\gamma_0(t)},\:\sum_{j=1}^m\frac{\overline h_j}{|\overline{h}|}X_j(\gamma_0(t))  \right\rangle dt  \stackrel{\eqref{eq_quasicalibrazione2}}{\geq}  2(r-\delta)(1-\ve).
\end{split}
\]
In any case, we obtain
\[\mathrm{diam}(B(q,r)) \geq d(q_1,q_2) \geq 2(r-\delta)(1-\ve)\]
and we conclude by letting $\delta \searrow 0$.
\end{proof}

\appendix

\section{Existence of calibrations}\label{app_calib}
In this appendix we prove Lemma \ref{lem_esistenzacalibrazione}; before doing so, we need to introduce the sub-Riemannian Hamiltonian. As in Section \ref{sec_c2}, $M=(M,\Delta,g)$ will denote a fixed $n$-dimensional $C^{1,1}$ sub-Riemannian manifold of constant rank $m$ and, for the sake of brevity, we will assume that there exists a family of horizontal  vector fields $X_1,\dots,X_m$ of class $C^{1,1}$ that form a global orthonormal frame of $\Delta$; this is not restrictive since all the arguments will be local. 

\begin{defi}
    The \emph{sub-Riemannian Hamiltonian} is the function $H: T^*M \to \mathbb{R}$ defined by $H(q,\lambda) \ceq \frac{1}{2} \sum_{i=1}^m \langle \lambda, X_i(q) \rangle^2$. We can consider (in the canonical coordinates on $T^*M$) the associated \emph{Hamiltonian system}:
    \begin{equation} \label{Ham syst}
    \begin{cases}
    \dot{q}=\dfrac{\pp H}{\pp \lambda} (q,\lambda)\vspace{.15cm} \\
    \dot{\lambda}=-\dfrac{\pp H}{\pp q} (q,\lambda).
    \end{cases}
    \end{equation}
    If $(q(t),\lambda(t))$ is a solution to~\eqref{Ham syst}, it is called \emph{normal extremal} and $q(t)$ \emph{normal extremal trajectory}. 
\end{defi}

Observe that the $C^{1,1}$ assumption on $X_1,\dots.X_m$ provides the minimal regularity that  guarantees existence and uniqueness of solutions to~\eqref{Ham syst}. We now state an important result about normal extremals; see \cite[Theorem 4.25 and Corollary 4.27]{abb} for the proof.

\begin{teo} \label{norm ext}
    A curve $(q,\lambda): [a,b] \to T^*M$ is a normal extremal if and only if, for every $i=1,\dots,m$, $h_i(t)=\langle \lambda(t), X_i(q(t)) \rangle$ for a.e.~$t \in [a,b]$, where $h=(h_1,\dots,h_m) \in L^\infty([a,b];\R^m)$ is such that $\dot{\gamma}(t)=\sum_{j=1}^m h_j(t)X_j(\gamma(t))$ for a.e.~$t \in [a,b]$. In this case, $\abs{\dot{q}(t)}_{q(t)}$ is constant and it satisfies
    \[
    \frac{1}{2} \abs{\dot{q}(t)}_{q(t)} = H(q(t),\lambda(t)) \qquad \text{for every } t \in [a,b].
    \]
    In particular, $q(t)$ is arclength parametrized if and only if $H(q(t),\lambda(t))=\frac{1}{2}$.
\end{teo}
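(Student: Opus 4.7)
The plan is to unpack the Hamiltonian system \eqref{Ham syst} in canonical coordinates on $T^*M$, read off the formula for $h_i$ directly from the first Hamilton equation, and then combine autonomy of $H$ with orthonormality of the frame to obtain the energy identity.

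First I would compute, in local canonical coordinates $(q,\lambda)$ on $T^*M$,
$$\frac{\partial H}{\partial \lambda}(q,\lambda) = \sum_{i=1}^m \langle \lambda, X_i(q)\rangle\, X_i(q),$$
so the first equation in \eqref{Ham syst} reads
$$\dot q(t) = \sum_{i=1}^m \langle \lambda(t), X_i(q(t))\rangle\, X_i(q(t)).$$
Since $X_1,\dots,X_m$ form an orthonormal frame of $\Delta$, the decomposition $\dot q(t) = \sum_j h_j(t) X_j(q(t))$ is unique, so matching coefficients forces $h_i(t) = \langle \lambda(t), X_i(q(t))\rangle$ for a.e.\ $t$. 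Conversely, starting from this identity one reverses the calculation to recover the first Hamilton equation; coupled with the adjoint equation $\dot\lambda = -\partial_q H$, this gives the converse direction and closes the equivalence.

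For the energy statement I would invoke the standard autonomy argument: along any solution of \eqref{Ham syst},
$$\frac{d}{dt}H(q(t),\lambda(t)) = \frac{\partial H}{\partial q}\cdot\dot q + \frac{\partial H}{\partial \lambda}\cdot\dot\lambda = \frac{\partial H}{\partial q}\cdot\frac{\partial H}{\partial \lambda} - \frac{\partial H}{\partial \lambda}\cdot\frac{\partial H}{\partial q} = 0,$$
so $H$ is constant along the trajectory. Orthonormality of the frame then yields
$$|\dot q(t)|^2_{q(t)} = \sum_{i=1}^m h_i(t)^2 = 2H(q(t),\lambda(t)),$$
which gives both the constancy of $|\dot q(t)|_{q(t)}$ and the displayed identity, and hence the arclength characterization.

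The main obstacle, if there is one, is regularity bookkeeping rather than anything conceptual: the $C^{1,1}$ assumption on the $X_i$ is precisely what makes the right-hand side of \eqref{Ham syst} locally Lipschitz in $(q,\lambda)$, so that Cauchy--Lipschitz supplies the existence and uniqueness of solutions $(q(t),\lambda(t))$ that the argument tacitly uses, and the orthonormal frame $X_1,\dots,X_m$ (available locally from the $C^{1,1}$ metric by Gram--Schmidt) is what lets the coefficient matching be performed without ambiguity. Beyond that, the computations are elementary.
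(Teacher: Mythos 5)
The paper does not actually prove this statement: it defers entirely to \cite[Theorem~4.25 and Corollary~4.27]{abb}. Your argument is the standard one behind that reference and is correct in substance for everything the paper later uses, namely the forward implication and the energy identity: computing $\partial H/\partial\lambda=\sum_i\langle\lambda,X_i(q)\rangle X_i(q)$, matching coefficients against the (unique, by orthonormality) decomposition $\dot q=\sum_j h_jX_j(q)$, and invoking conservation of the autonomous Hamiltonian together with $|\dot q|_q^2=\sum_i h_i^2=2H$. Note that your computation yields $\tfrac12|\dot q(t)|_{q(t)}^2=H(q(t),\lambda(t))$, which is the correct formula (the display in the statement omits the square; the two agree exactly in the arclength case $H=\tfrac12$, which is all that is used).

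The one place where your argument is thinner than the cited source is the converse implication. As literally stated, ``$h_i(t)=\langle\lambda(t),X_i(q(t))\rangle$ implies $(q,\lambda)$ is a normal extremal'' cannot follow from the first Hamilton equation alone: one can choose a covector curve $\lambda(t)$ satisfying these $m$ scalar identities along any admissible $q$ without $\lambda$ solving $\dot\lambda=-\partial H/\partial q$. Your fix---``coupled with the adjoint equation''---makes the converse essentially definitional rather than proved. In \cite{abb} the equivalence is meaningful because $(q,\lambda)$ is a priori an extremal pair from the Pontryagin Maximum Principle, i.e.\ $\lambda$ already satisfies the adjoint equation for the time-dependent Hamiltonian $\sum_i h_i(t)\langle\lambda,X_i(q)\rangle$, and the content of the theorem is that the normal extremals among these are exactly the solutions of the autonomous system \eqref{Ham syst}. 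If you want a self-contained proof of the full ``if and only if'' you would need to set up that framework; for the purposes of this paper (the proof of Lemma~\ref{lem_esistenzacalibrazione} only uses the forward direction and the conservation of $H$), your argument suffices.
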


Using Theorem \ref{norm ext}, one can prove Lemma \ref{lem_esistenzacalibrazione}.

\begin{proof}[Proof of Lemma~\ref{lem_esistenzacalibrazione}]
Up to fixing a chart $U$ around $p$, we can assume that $M=\mathbb{R}^n$ and $p=0$. We can also suppose $X_1 \equiv \pp_1 \text{ on }U, \, X_2(0)=\pp_2,\dots, \, X_m(0)=\pp_m.$ Let $(q(t),\lambda(t))$ be the solution of \eqref{Ham syst} with initial condition $(q(0),\lambda(0))=(0,e_1^*)$ (it is unique by assumption). In particular, $q(t)$ is a normal extremal trajectory and it is not the constant curve $(0,0)$. Moreover, $H(q(0),\lambda(0))=\frac 12$, hence by Theorem~\ref{norm ext} $q(t)$ is arclength parametrized, so that $H(q(t),\lambda(t))=\frac{1}{2}$, that is,
\[\sum_{i=1}^m \langle \lambda(t), X_i(q(t)) \rangle^2 = 1.\]
Then, by Theorem \ref{norm ext} we have $\dot{q}(0)=\pp_1$, so that $\langle e_1^*, \dot{q}(0) \rangle = 1$ and, if we set $H' \ceq \{0\} \times \mathbb{R}^{n-1}$, we get $\dot{q}(0) \notin T_0 H'$.
Observe that for a sufficiently small neighbourhood $U' \se H'$ of $0$ we can find a (unique) non-vanishing $C^{1,1}$ function $\xi : U' \to \spa \lbrace e_1^*
\rbrace \se T^*\mathbb{R}^n$ such that $\xi(0)=e_1^*$ and $H((0,x'),\xi(x'))=\frac{1}{2}$ for every $(0,x') \in H'$. Up to shrinking $U'$, we can denote by $(Q(t,x'),\Lambda(t,x'))$ the solution at time $t$ of \eqref{Ham syst} with initial condition $(Q(0,x'),\Lambda(0,x'))=((0,x'),\xi(x'))$. Since
\[\dot{q}(0) = dQ_{(0,0)}[(1,0)] \notin T_0 H' = dQ_{(0,0)}[\{0\} \times H'],\]
$dQ_{(0,0)}$ is invertible and, up to shrinking $U'$, there exists $\ve>0$ such that $Q_{\vert_{(-\ve,\ve) \times U'}}$ is a diffeomorphism onto its image $W \se \mathbb{R}^n$. Furthermore, for every $x = Q(t,x') \in W$, by Theorem \ref{norm ext} we have
\[H(x,\Lambda(t,x')) = H(Q(0,x'),\Lambda(0,x')) = H((0,x'),\xi(x')) = \frac{1}{2},\]
that is,
\begin{equation} \label{preserve Ham flow}
\sum_{i=1}^m \langle \Lambda(t,x'), X_i(x) \rangle^2 = 1.
\end{equation}
Now, we want to show that $\Lambda$ is a calibration that calibrates $(-\ve,\ve) \ni t \mapsto Q(t,x')$ for every $x' \in U'$. Indeed, for every $x = Q(t,x')$ and $v=\sum_{i=1}^m h_i X_i(x) \in \Delta_x$, we have
\begin{equation} \label{calib ineq}
\langle \Lambda(t,x'), v\rangle = \sum_{i=1}^m h_i \langle \Lambda(t,x'), X_i(x) \rangle \leq \left(\sum_{i=1}^m h_i^2 \right)^{\frac{1}{2}} =\abs{v}_x
\end{equation}
thanks to \eqref{preserve Ham flow} and the Cauchy-Schwarz inequality. In particular, if $\abs{v}_x=1$, the equality holds exactly when $h_i = \langle \Lambda(t,x'), X_i(x) \rangle$ for every $1 \leq i \leq m$, i.e.,
\[
v = \sum_{i=1}^m \langle \Lambda(t,x'), X_i(x) \rangle X_i(x) = \frac{\pp H}{\pp \lambda}(x,\Lambda(t,x')) = \frac{\pp H}{\pp \lambda}(Q(t,x'),\Lambda(t,x')) = \frac{\pp Q}{\pp t}(t,x').
\]
It is well-known (see e.g.~\cite[Appendix C]{LiuSussmann} or~\cite[Theorem~2.58]{TesiPigati}) that  $\Lambda$ is exact, and in fact that $\Lambda = Q_*(dt)$ where $Q_*$ denotes the pushforward by $Q$; however, for the sake of completeness we include a proof of this fact. Define the vector field $Y(x) \ceq \frac{\pp Q}{\pp t}(Q^{-1}(x))$ for every $x \in W$; notice that $Y$ is unitary. Since $(Q,\Lambda)$ solves \eqref{Ham syst}, observe that
\begin{equation} \label{calib eq}
Y(x)=\sum_{i=1}^m \langle \Lambda(t,x'), X_i(x) \rangle X_i(x),
\end{equation}

\begin{equation} \label{time derivative}
\begin{split}
\frac{\pp \Lambda}{\pp t}(t,x') & = -\frac{\pp H}{\pp q}(x,\Lambda(t,x')) = -\sum_{i=1}^m \langle \Lambda(t,x'), X_i(x) \rangle \langle \Lambda(t,x'), d X_i(x) \rangle \\
& = -\langle \Lambda(t,x'), dY(x) \rangle + \sum_{i=1}^m d(\langle \Lambda(t,x'), X_i(x) \rangle) \langle \Lambda(t,x'), X_i(x) \rangle \\
& = -\langle \Lambda(t,x'), dY(x) \rangle + d(H(x,\Lambda(t,x'))) = -\langle \Lambda(t,x'), dY(x) \rangle.
\end{split}
\end{equation}
By \eqref{calib eq} and \eqref{preserve Ham flow}, we obtain that $\Lambda$ and $Q_*(dt)$ coincide on $dQ_{(t,x')}[(1,0)]=Y(x)$:
\[
\langle \Lambda(t,x'), Y(x) \rangle = \sum_{i=1}^m \langle \Lambda(t,x'), X_i(x) \rangle^2 = 1 = \langle dt, (1,0) \rangle = \langle Q_*(dt)(x), dQ_{(t,x')}[(1,0)] \rangle.
\]
Then, it suffices to show that, for every $w \in \mathbb{R}^{n-1}$, $\Lambda$ and $Q_*(dt)$ agree on $dQ_{(t,x')}[(0,w)]$. Indeed, $Q_*(dt)$ always vanishes on this vector, whereas $\Lambda$ vanishes on it if $t=0$ (recall that $\Lambda(0,x')=\xi(x')$ is a multiple of $e_1^*$). But we have
\[
\begin{split}
\frac{d}{dt} \langle \Lambda(t,x'), dQ_{(t,x')}[(0,w)] \rangle & = \left\langle \frac{d}{dt} \Lambda(t,x'), dQ_{(t,x')}[(0,w)] \right\rangle + \left\langle \Lambda(t,x'), \frac{d}{dt} dQ_{(t,x')}[(0,w)] \right\rangle \\
& = \begin{aligned}[t] & -\langle \Lambda(t,x'), dY(x) \circ dQ_{(t,x')}[(0,w)] \rangle  + \langle \Lambda(t,x'), dY(x) \circ dQ_{(t,x')}[(0,w)] \rangle = 0\end{aligned}
\end{split}
\]
thanks to \eqref{time derivative} and the fact that $Q$ is of class $C^{1,1}$. Hence, $\Lambda$ has to identically vanish on $dQ_{(t,x')}[(0,w)]$ too.
\end{proof}

\subsubsection*{Acknowledgements}
The authors thank Davide Barilari for the interesting discussions on the topic of the present paper and the anonymous referee for her/his careful reading and precious suggestions.
\subsubsection*{Funding information}
M.~D.~M. acknowledges the support of the Swiss National Science Foundation (SNSF) Starting Grant \emph{Challenges and Breakthroughs in the Mathematics of Plasmas}, TMSGI2\textunderscore226018. The authors are supported by University of Padova and  GNAMPA of INdAM, in particular through the INdAM-GNAMPA 2026 Project Variational, Geometric, and Analytic Perspectives on Regularity, CUP E53C25002010001. D.~V.~is also supported by INdAM project {\em VAC\&GMT} and by PRIN 2022PJ9EFL project {\em Geometric Measure Theory: Structure of Singular Measures, Regularity Theory and Applications in the Calculus of Variations} funded by the European Union - Next Generation EU, Mission 4, component 2 - CUP:E53D23005860006.
\subsubsection*{Author contribution}
All authors have accepted responsibility for the entire content of this manuscript and consented to its submission to the journal, reviewed all the results and approved the final version of the manuscript. All authors participated equally in the elaboration of the manuscript.
\subsubsection*{Conflict of interest}
The authors state no conflict of interest.

\bibliographystyle{acm}
\bibliography{diamsrballbib}

\end{document}